\providecommand{\U}[1]{\protect\rule{.1in}{.1in}}
\theoremstyle{plain}
\newtheorem{corollary}{Corollary}
\newtheorem{definition}{Definition}
\newtheorem{example}{Example}
\newtheorem{remark}{Remark}
\newtheorem{theorem}{Theorem}
\numberwithin{equation}{section}
\let\pdfoutput=\undefined\fi
\begin{document}
\title[Characterizations of Beta and Gamma functions]{Directional convexity and characterizations of Beta and Gamma functions}
\author{Martin Himmel}
\curraddr{Faculty of Mathematics, Computer Science and Econometrics University of
Zielona G\'{o}ra, Szafrana 4A, PL 65-516 Zielona G\'{o}ra, Poland}
\email{himmel@mathematik.uni-mainz.de}
\author{Janusz Matkowski}
\curraddr{Faculty of Mathematics, Computer Science and Econometrics University of
Zielona G\'{o}ra, Szafrana 4A, PL 65-516 Zielona G\'{o}ra, Poland}
\email{J.Matkowski@wmie.uz.zgora.pl }

\begin{abstract}
The logarithmic convexity of restrictions of the Beta functions to rays
parallel to the main diagonal and the functional equation
\[
\phi\left(  x+1\right)  =\frac{x\left(  x+k\right)  }{\left(  2x+k+1\right)
\left(  2x+k\right)  }\phi\left(  x\right)  ,\ \ \ \ \ \ x>0,
\]
for $k>0$ allow to get a characterizations of the Beta function. This fact and
a notion of the beta-type function lead to a new characterization of the Gamma function.

\end{abstract}
\maketitle

\section{\bigskip Introduction}

\footnotetext{\textit{2010 Mathematics Subject Classification. }Primary:
33B15, 26B25, 39B22.
\par
\textit{Keywords and phrases:} Gamma function, Beta function, beta-type
function, logarithmical convexity, geometrical convexity, directional
convexity, functional equation.}

The Euler Gamma function and its two variable companion, the Beta function,
both playing important role in applications, have found much attention in real
and complex analysis. The Gamma function, actually its restriction to the set
of positive reals, is logarithmically convex and, as an extension of the
$\left(  n-1\right)  !,$ satisfies the functional equation%
\[
f\left(  x+1\right)  =xf\left(  x\right)  ,\text{ \ \ \ \ }x>0;\text{
\ \ \ \ \ \ }f\left(  1\right)  =1.\text{\ }%
\]
Bohr and Mollerup \cite{Bohr}, 1922, proved that these two properties
characterize the Gamma function. The class of logarithmically convex functions
is rather narrow - for instance no power function has this property. Gronau
and Matkowski \cite{DG-JMI} improved Bohr-Mollerup theorem showing that the
Gamma function is the only solution of this equation that is geometrically
convex in a vicinity of $\ +\infty$. Applying this result, Alzer and Matkowski
\cite{Alzer} gave a characterization of the Gamma functions based on the
convexity with respect to the power means. \ 

It is known [Dragomir, Agarval, Barnett \cite{Drag}] that the Beta function is
logarithmically convex as a two variable function in $\left(  0,\infty\right)
^{2}$. It follows that the Beta function is \textit{logarithmically convex in
the direction of the main diagonal}, that is, for every $k>0$, the restriction
of the Beta function to the ray $\left\{  \left(  x,x+k\right)  :x>0\right\}
$ is logarithmically convex, and, moreover, solves the functional equation
\[
\phi\left(  x+1\right)  =\frac{x\left(  x+k\right)  }{\left(  2x+k+1\right)
\left(  2x+k\right)  }\phi\left(  x\right)  ,\ \ \ \ \ \ x>0.
\]
In section \ref{Main}, applying the Krull theorem \cite{Krull}, 1949, we prove
that this functional equation, the logarithmic convexity in the direction of
the main diagonal (much weaker that the logarithmic convexity), and a natural
initial condition characterize the Beta function (Theorem
\ref{BetaCharacterization}). Employing the method of Gronau-Matkowski
\cite{DG-JMII}, we show that the same functional equation and the geometrical
convexity also allow to get characterization of the Beta function (Theorem
\ref{GMBeta}). In section \ref{BetaTypeFunctions}, given a function
$\gamma:\left(  a,\infty\right)  \rightarrow\left(  0,\infty\right)  $,
($a\geq-\infty$), we define the two place function $B_{\gamma}:\left(
a,\infty\right)  ^{2}\rightarrow\left(  0,\infty\right)  ,$ referred to as
\textit{beta-type function of a generator} $\gamma$, by
\[
B_{\gamma}\left(  x,y\right)  :=\frac{\gamma\left(  x\right)  \gamma\left(
y\right)  }{\gamma\left(  x+y\right)  }\text{, \ \ \ \ \ \ \ \ \ }%
x,y>a\text{,}%
\]
and we observe that the equality $B_{\gamma_{2}}=B_{\gamma_{1}}$ holds iff
$\frac{\gamma_{2}}{\gamma_{1}}$ is an exponential function. It turns out that
this fact and the the above mentioned results about Beta function\ allow to
obtain new characterizations of the Gamma function. In the final section the
directional convexity is briefly discussed.

\section{Preliminaries and auxiliary results}

Recall that the Euler Gamma function $\Gamma:\left(  0,\infty\right)
\rightarrow\left(  0,\infty\right)  $, defined by%

\[
\Gamma\left(  x\right)  :=\int_{0}^{\infty}e^{-t}t^{x-1}%
dt,\ \ \ \ \ \ \ \ x>0,
\]
\ \ satisfies the functional equation%

\begin{equation}
f\left(  x+1\right)  =xf\left(  x\right)  ,\text{ \ \ \ \ }x>0;\text{
\ \ \ \ \ \ }f\left(  1\right)  =1.\text{\ }\tag{1}%
\end{equation}

The Beta function, $\mathcal{B}:\left(  0,\infty\right)  ^{2}\rightarrow
\left(  0,\infty\right)  ,$ defined by%

\begin{equation}
\mathcal{B}\left(  x,y\right)  :=\frac{\Gamma\left(  x\right)  \Gamma\left(
y\right)  }{\Gamma\left(  x+y\right)  },\ \ \ \ \ \ \ \ \ x,y>0,\tag{2}%
\end{equation}
playing important role in applications, is symmetric, that is
\[
\mathcal{B}\left(  x,y\right)  =\mathcal{B}\left(  y,x\right)
,\ \ \ \ \ \ \ \ \ x,y>0,
\]
and has the following integral representation%

\begin{equation}
\mathcal{B}\left(  x,y\right)  =\int_{0}^{1}t^{x-1}\left(  1-t\right)
^{y-1}dt,~\text{\ \ \ \ \ \ \ }x,y>0.~\tag{3}%
\end{equation}

Krull (\cite{Krull}, cf. also Kuczma \cite{Kuczma1}, pp. 114-118) proved the following

\begin{theorem}
\label{Krull}Let $a\geq-\infty$ be arbitrarily fixed. Suppose that $F:\left(
a,\infty\right)  \rightarrow\mathbb{R}$ is convex or concave, and%
\[
\lim_{x\rightarrow\infty}\left[  F\left(  x+1\right)  -F\left(  x\right)
\right]  =0.
\]
Then for every fixed $\left(  x_{0,},y_{0}\right)  \in\left(  a,\infty\right)
\times\mathbb{R}$ there exists exactly one convex or concave function
$\varphi:\left(  a,\infty\right)  \rightarrow\mathbb{R}$ satisfying the
functional equation%
\begin{equation}
\varphi\left(  x+1\right)  =\varphi\left(  x\right)  +F\left(  x\right)
,\text{ \ \ \ \ \ \ \ \ }x>a\tag{4}%
\end{equation}
such that%
\[
\varphi\left(  x_{0}\right)  =y_{0};
\]
moreover, for all $x>a$,%
\begin{equation}
\varphi\left(  x\right)  =y_{0}+\left(  x-x_{0}\right)  F\left(  x_{0}\right)
\tag{5}%
\end{equation}%
\[
-\sum_{n=0}^{\infty}\left\{  F\left(  x+n\right)  -F\left(  x_{0}+n\right)
-\left(  x-x_{0}\right)  \left[  F\left(  x_{0}+n+1\right)  -F\left(
x_{0}+n\right)  \right]  \right\}  .
\]

\end{theorem}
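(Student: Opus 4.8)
The plan is to prove this as a Bohr--Mollerup type result: first show that \emph{any} convex solution is forced to be given by the explicit formula (5) (this settles uniqueness), and then verify directly that the function defined by (5) is a convex solution with the prescribed value at $x_0$ (this settles existence). Replacing $(F,\varphi)$ by $(-F,-\varphi)$ interchanges convexity and concavity, so I may assume without loss of generality that $F$ is concave and that the function $\varphi$ we seek is convex; the case of convex $F$ and concave $\varphi$ is identical after this reflection. Iterating the functional equation (4) gives, for every positive integer $n$,
\[
\varphi(x+n)=\varphi(x)+\sum_{j=0}^{n-1}F(x+j),\qquad x>a,
\]
whence $\varphi(x+n)-\varphi(x_0+n)=\varphi(x)-y_0+\sum_{j=0}^{n-1}\left[F(x+j)-F(x_0+j)\right]$.

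Next I would exploit the convexity of $\varphi$ on a fixed unit interval $x\in(x_0,x_0+1]$. The unit increments of $\varphi$ equal $F$ by (4), and the monotonicity of chord slopes of a convex function, applied to the points $x_0+n-1<x_0+n<x+n\le x_0+n+1$, squeezes the chord slope of $\varphi$ on $[x_0+n,x+n]$ between $F(x_0+n-1)$ and $F(x_0+n)$. This yields $(x-x_0)F(x_0+n-1)\le\varphi(x+n)-\varphi(x_0+n)\le(x-x_0)F(x_0+n)$. Substituting the expression above and telescoping $\sum_{j=0}^{n-1}\left[F(x_0+j+1)-F(x_0+j)\right]=F(x_0+n)-F(x_0)$, the estimate rearranges to
\[
\Phi_n(x)-(x-x_0)\bigl[F(x_0+n)-F(x_0+n-1)\bigr]\le\varphi(x)\le\Phi_n(x),
\]
where $\Phi_n(x)$ is the $n$-th partial sum of the series in (5). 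Since the hypothesis $\lim_{t\to\infty}\left[F(t+1)-F(t)\right]=0$ forces the bracketed gap to vanish as $n\to\infty$, I conclude $\varphi(x)=\lim_{n\to\infty}\Phi_n(x)$, i.e. $\varphi$ agrees with (5) on $(x_0,x_0+1]$; the functional equation (4) then propagates this identity to all of $(a,\infty)$, which pins $\varphi$ down uniquely.

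For existence I would run the argument in reverse: define $\tilde\varphi$ by the right-hand side of (5) and check the four required properties. The initial condition $\tilde\varphi(x_0)=y_0$ is immediate, since every summand $c_n(x):=F(x+n)-F(x_0+n)-(x-x_0)\bigl[F(x_0+n+1)-F(x_0+n)\bigr]$ vanishes at $x=x_0$. Convexity follows because, $F$ being concave, each $c_n$ is a concave function of $x$, so each $\Phi_n$ is convex (an affine term minus a concave sum) and a pointwise limit of convex functions is convex. The functional equation is obtained by a telescoping computation: subtracting the series for $\tilde\varphi(x)$ from that for $\tilde\varphi(x+1)$, the terms $F(x+1+n)$ shift against $F(x+n)$, the affine parts recombine, and what survives is exactly $F(x)$ together with a boundary term $F(x+1+N)-F(x_0+N+1)$ that tends to $0$ by the same limit hypothesis. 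The one genuinely delicate point is convergence of the series, and this is where I expect the real work to lie: on the unit interval concavity of $F$ gives $0\le c_n(x)\le(x-x_0)\bigl[g_{n-1}-g_n\bigr]$ with $g_n:=F(x_0+n+1)-F(x_0+n)$, and the majorant telescopes to the finite value $(x-x_0)g_0$ because $g_n\to0$. Once absolute convergence is in hand and the limit interchanges with the functional equation and with convexity are justified, the explicit formula (5) simultaneously certifies existence and, through the squeeze above, excludes any second convex solution.
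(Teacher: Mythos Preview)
The paper does not prove this theorem at all: it is quoted as a result of Krull (with a reference to Kuczma's monograph for a textbook treatment) and is used as a black box in the proof of Theorem~\ref{BetaCharacterization}. So there is no ``paper's own proof'' to compare your attempt against.

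That said, your sketch is essentially the standard Bohr--Mollerup/Krull argument and is correct in outline. A couple of points are worth tightening. First, your reduction ``WLOG $F$ concave and $\varphi$ convex'' silently uses that a \emph{concave} $F$ admits no concave solution (except in the degenerate case where $F$ is affine): this follows because $F$ concave with $F(x+1)-F(x)\to 0$ forces $F(x+1)-F(x)\ge 0$, hence $F$ is nondecreasing, while a concave $\varphi$ would make $F=\varphi(\cdot+1)-\varphi(\cdot)$ nonincreasing. You should say this explicitly, since the theorem asserts uniqueness in the class ``convex or concave''. Second, your convergence estimate $0\le c_n(x)\le (x-x_0)(g_{n-1}-g_n)$ is written only for $x\in(x_0,x_0+1]$, but to check global convexity of the limit and to verify the functional equation you need convergence for every $x>a$; the same concavity bounds (with $g_{n+m}$ in place of $g_n$ for $x$ in the interval $(x_0+m,x_0+m+1]$) handle this, and your boundary term $F(x+1+N)-F(x_0+N+1)\to 0$ can be justified by noting that $F'_{+}(t)\to 0$, so differences of $F$ over bounded gaps vanish at infinity. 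With those two clarifications your argument is complete and matches the classical proof.
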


Since for any $c>a$\ and any function $\varphi_{0}:\left[  c,c+1\right]
\rightarrow\mathbb{R}$ satisfying the initial condition $\varphi_{0}\left(
c+1\right)  =\varphi_{0}\left(  c\right)  +F\left(  c\right)  $ there exists a
unique solution $\varphi:\left(  a,\infty\right)  \rightarrow\mathbb{R}$ of
the functional equation (4) such that $\varphi|_{\left[  c,c+1\right]
}=\varphi_{0}$ (cf. Kuczma \cite{Kuczma1}, p. 70), the following meaningful
improvement of Krull's theorem holds true.

\begin{theorem}
\label{KrulIimproved}Let $a\geq-\infty$ be arbitrarily fixed. Suppose that
$F:\left(  a,\infty\right)  \rightarrow\mathbb{R}$ is convex (concave), and
\[
\lim_{x\rightarrow\infty}\left[  F\left(  x+1\right)  -F\left(  x\right)
\right]  =0.
\]

Then, for every fixed $\left(  x_{0,},y_{0}\right)  \in\left(  a,\infty
\right)  \times\mathbb{R}$, there exists exactly one function $\varphi:\left(
a,\infty\right)  \rightarrow\mathbb{R}$ satisfying the functional equation%
\[
\varphi\left(  x+1\right)  =\varphi\left(  x\right)  +F\left(  x\right)
,\text{ \ \ \ \ \ \ \ \ }x>a
\]
that is concave (convex) in some interval $\left(  b,\infty\right)  ,$ $b\geq
a$, and such that%
\[
\varphi\left(  x_{0}\right)  =y_{0};
\]
moreover, (5) holds for all $x>a$.
\end{theorem}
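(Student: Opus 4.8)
The plan is to deduce Theorem~\ref{KrulIimproved} from Theorem~\ref{Krull} and the uniqueness-of-extension remark stated just before it; existence and the representation are immediate, so the real content is to strengthen Krull's uniqueness from the class of globally concave (convex) solutions to the larger class of solutions that are concave (convex) only on a half-line. For existence, Theorem~\ref{Krull} already provides a solution $\varphi$ of (4) with $\varphi(x_0)=y_0$ that is concave (convex) on all of $(a,\infty)$ and is given by (5); this $\varphi$ is in particular concave (convex) on $(b,\infty)$ with $b=a$, so it witnesses the assertion and satisfies (5). It then remains to show that $\varphi$ is the only solution of (4) with $\varphi(x_0)=y_0$ that is concave (convex) on some half-line.

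So let $\psi:(a,\infty)\to\mathbb{R}$ be any solution of (4) with $\psi(x_0)=y_0$ which is concave (convex) on some $(b,\infty)$, $b\geq a$. Iterating (4) yields $\psi(x_0+n)=y_0+\sum_{j=0}^{n-1}F(x_0+j)$ for every positive integer $n$, and the identical formula holds for $\varphi$; hence $\psi$ and $\varphi$ coincide at every point $x_0+n$. Choosing $n$ so large that $x_1:=x_0+n>b$, I would view $\psi|_{(b,\infty)}$ and $\varphi|_{(b,\infty)}$ as concave (convex) solutions of (4) on the half-line $(b,\infty)$; since $F|_{(b,\infty)}$ remains convex (concave) and still satisfies $\lim_{x\to\infty}[F(x+1)-F(x)]=0$, Theorem~\ref{Krull} applies on $(b,\infty)$. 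Both restrictions take the common value $\varphi(x_1)$ at $x_1>b$, so the uniqueness part of Theorem~\ref{Krull}, applied on $(b,\infty)$ with data $x_1,\varphi(x_1)$, forces $\psi=\varphi$ on $(b,\infty)$.

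To propagate this equality to all of $(a,\infty)$, fix any $c>b$: the functions $\psi$ and $\varphi$ coincide on $[c,c+1]\subset(b,\infty)$ and both solve (4) on $(a,\infty)$, so by the uniqueness of the extension of a solution of (4) from $[c,c+1]$ to $(a,\infty)$ recorded in the remark between the two theorems, $\psi=\varphi$ on $(a,\infty)$. As $\psi=\varphi$, the representation (5) holds for this unique solution, which finishes the argument.

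The step I expect to be the crux is the localization to $(b,\infty)$, where Theorem~\ref{Krull} becomes available. Two points must be secured there: the convexity classes must be paired correctly --- the representation (5) shows that each summand inherits the convexity of $F$, so a convex $F$ produces a concave $\varphi$, exactly the pairing in the statement --- and one must exhibit a point $x_1>b$ at which $\psi$ and $\varphi$ are already known to agree. The integer-shift identity supplies such a point for free, and the concluding transport back to $(a,\infty)$ is a routine use of the extension remark; it is precisely this back-extension that lets the weaker, merely eventual, convexity hypothesis still single out the global Krull solution.
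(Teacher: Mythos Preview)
Your argument is correct and is precisely the intended one: the paper does not give a separate proof of Theorem~\ref{KrulIimproved} beyond the sentence preceding it, which records the unique-extension fact for solutions of (4) from an interval $[c,c+1]$ and declares that this, combined with Theorem~\ref{Krull}, yields the improvement. Your proposal simply fills in the steps that this sentence compresses --- iterating (4) to find a common value at some $x_1>b$, applying Krull's uniqueness on $(b,\infty)$, and then transporting the equality back via the extension remark --- so the approach coincides with the paper's.
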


\bigskip

Let $I\subset\left(  0,\infty\right)  $ be an interval. A function
$\phi:I\rightarrow\left(  0,\infty\right)  $ is called: \textit{geometrically
convex,} if%
\[
\phi\left(  x^{t}y^{1-t}\right)  \leq\left[  \phi\left(  x\right)  \right]
^{t}\left[  \phi\left(  y\right)  \right]  ^{1-t}\text{, \ \ \ \ \ \ \ \ }%
x,y\in I\,;\text{ }t\in\left(  0,1\right)  ;
\]
\textit{geometrically concave,} if the reversed inequality is satisfied; and
\textit{geometrically affine,} if the equality holds.

It is easy to check that a function $\phi:\left(  0,\infty\right)
\rightarrow\left(  0,\infty\right)  $ is geometrically affine\ iff
$\phi\left(  x\right)  =ax^{p}$ for some $a>0,$ and $p\in\mathbb{R}$.

\begin{remark}
If $\phi:I\rightarrow\left(  0,\infty\right)  $ is Jensen geometrically
convex, that is
\[
\phi\left(  \sqrt{xy}\right)  \leq\sqrt{\phi\left(  x\right)  \phi\left(
y\right)  }\text{, \ \ \ \ \ \ \ \ }x,y\in I\,,
\]
and $\phi$ satisfies one of the following conditions: $\phi$ is continuous at
a point; or $\phi$ is bounded from above in a neighborhood of a point, or
$\phi$ is Lebesgue measurable, then $\phi$ is geometrically convex.

A function $\phi:I\rightarrow\left(  0,\infty\right)  $ is geometrically
convex (Jensen geometrically convex) if, and only if, the function
log$\circ\phi\circ\exp$ is convex (Jensen convex) in the interval $\log\left(
I\right)  .$
\end{remark}

From Gronau and Matkowski \cite{DG-JMII}, (Theorems 2 and 3), we have the following

\begin{theorem}
\label{GronauMatkowski}Let $G:\left(  0,\infty\right)  \rightarrow\left(
0,\infty\right)  $ be such that $\log\circ G$ is concave on some vicinity of
$\infty,$ and
\[
\lim_{x\rightarrow\infty}\frac{G\left(  x+\delta\right)  }{G\left(  x\right)
}=1\ \ \ \text{for\ some \ \ }\delta>0.
\]
Then, for every $c>0,$ there exists exactly one solution $\phi:\left(
0,\infty\right)  \rightarrow\left(  0,\infty\right)  $ of the functional
equation%
\[
\phi\left(  x+1\right)  =G\left(  x\right)  \phi\left(  x\right)  ,\text{
\ \ \ \ \ \ }x>0\text{,}%
\]
such that $\phi$ is geometrically convex on some vicinity of $\infty$ and
$\phi\left(  1\right)  =c.$ Moreover, for all $x>0,$%
\[
\phi\left(  x\right)  =c\lim_{n\rightarrow\infty}G\left(  n\right)
^{-\frac{\log\left(  n+1+x\right)  -\log\left(  n+1\right)  }{\log\left(
n+1\right)  -\log n}}\frac{1}{G\left(  x\right)  }%
{\displaystyle\prod\limits_{j=1}^{n}}
\frac{G\left(  j\right)  }{G\left(  j+x\right)  }.
\]

\end{theorem}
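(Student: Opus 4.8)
The plan is to linearise by taking logarithms. Put $\Phi=\log\circ\,\phi$ and $F=\log\circ\,G$, so that the functional equation becomes the Krull-type equation $\Phi(x+1)=\Phi(x)+F(x)$, while, by the Remark, geometric convexity of $\phi$ near $\infty$ is precisely convexity of $h:=\log\circ\,\phi\circ\exp=\Phi\circ\exp$ there. First I would extract from the hypotheses on $G$ the two facts that drive everything. Since $F$ is concave near $\infty$, the increment $F(x+\delta)-F(x)$ is nonincreasing in $x$, and the assumption $G(x+\delta)/G(x)\to1$ makes its limit $0$; a standard concavity estimate then yields $F(x+1)-F(x)=\log\frac{G(x+1)}{G(x)}\to0$ and $\log G(n)=F(n)=o(n)$.

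Iterating the equation from $\phi(1)=c$ gives $\phi(n+1)=c\prod_{j=1}^{n}G(j)$ and $\phi(x+n+1)=G(x)\phi(x)\prod_{j=1}^{n}G(x+j)$, hence the identity
\[
R_n(x):=c\,\frac{1}{G(x)}\prod_{j=1}^{n}\frac{G(j)}{G(j+x)}=\frac{\phi(x)\,\phi(n+1)}{\phi(x+n+1)},\qquad\text{so}\qquad \phi(x)=R_n(x)\,\frac{\phi(x+n+1)}{\phi(n+1)}.
\]
Thus it suffices to control $\phi(x+n+1)/\phi(n+1)$ as $n\to\infty$. I restrict to $0<x<1$, the general case following because both sides of the asserted formula solve the functional equation. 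For large $n$ the relevant arguments lie where $h$ is convex, so the log--log difference quotients of $\phi$ are monotone; applying this to the triples $n<n+1<n+1+x$ and $n+1<n+1+x<n+2$, and using $\log G(n)=\Phi(n+1)-\Phi(n)$, yields with
\[
t_n(x)=\frac{\log(n+1+x)-\log(n+1)}{\log(n+1)-\log n},\qquad \widetilde t_n(x)=\frac{\log(n+1+x)-\log(n+1)}{\log(n+2)-\log(n+1)}
\]
the sandwich $G(n)^{t_n(x)}\le \phi(x+n+1)/\phi(n+1)\le G(n+1)^{\widetilde t_n(x)}$. Multiplying through by $R_n(x)$ traps $\phi(x)$ between $R_n(x)G(n)^{t_n(x)}$ and $R_n(x)G(n+1)^{\widetilde t_n(x)}$.

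The crux is to show these two bounds share a limit, i.e. $\widetilde t_n(x)\log G(n+1)-t_n(x)\log G(n)\to0$. Writing it as $t_n(x)\log\frac{G(n+1)}{G(n)}+(\widetilde t_n(x)-t_n(x))\log G(n+1)$, the first summand tends to $x\cdot0=0$ by $F(x+1)-F(x)\to0$, while $\widetilde t_n(x)-t_n(x)=O(1/n)$ and $\log G(n+1)=o(n)$ make the second vanish; since $t_n(x),\widetilde t_n(x)\to x$, the common limit is the claimed value, giving uniqueness:
\[
\phi(x)=c\lim_{n\to\infty}G(n)^{t_n(x)}\frac{1}{G(x)}\prod_{j=1}^{n}\frac{G(j)}{G(j+x)}.
\]
I expect the second summand, $(\widetilde t_n-t_n)\log G(n+1)$, to be the delicate point: it is exactly where the mismatch between the additive unit shift of the equation and the multiplicative (logarithmic) geometry of geometric convexity must be absorbed, and it closes only through the growth bound $\log G(n)=o(n)$ that the concavity of $F$ and $G(x+\delta)/G(x)\to1$ provide.

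For existence I would take the right-hand side above as the definition of $\phi$. Convergence of the limit and geometric convexity of $\phi$ near $\infty$ follow from the concavity of $F$ in the same way that, in Theorem \ref{KrulIimproved}, concavity of $F$ forces the Krull series (5) to converge to a convex solution: one verifies that the lower bounds $R_n(x)G(n)^{t_n(x)}$ and upper bounds $R_n(x)G(n+1)^{\widetilde t_n(x)}$ are monotone in $n$ and nested, hence convergent to a common value, and that $h=\log\circ\,\phi\circ\exp$ is convex near $\infty$ as a locally uniform limit of convex functions. Both $\phi(1)=c$ and $\phi(x+1)=G(x)\phi(x)$ then follow by telescoping the product in $R_n$ and using once more that $\log G(n)=o(n)$ and $G(n+1)/G(n)\to1$, which make the stray factors $G(n)^{t_n(\cdot)}/G(n+1+\cdot)$ tend to $1$. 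This produces the unique geometrically convex solution and completes the proof.
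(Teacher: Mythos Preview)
The paper does not give its own proof of this theorem: it is quoted verbatim from Gronau and Matkowski \cite{DG-JMII} (Theorems~2 and~3) and is used only as a black box in the proof of Theorem~\ref{GMBeta}. So there is no ``paper's proof'' to compare your attempt against.

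That said, your linearisation $\Phi=\log\phi$, $F=\log G$, the iteration identity for $R_n(x)$, and the geometric-convexity sandwich on the log--log slopes are exactly the natural route and, as far as one can tell from the product formula the paper quotes, the one taken in the original reference. The uniqueness half of your argument is essentially complete; the crux you identify, namely $(\widetilde t_n-t_n)\log G(n+1)\to 0$ via $\log G(n)=o(n)$, is correct and is indeed where the concavity of $F$ together with $G(x+\delta)/G(x)\to1$ is used. Your existence paragraph is more of a plan than a proof: the claim that the bounds $R_n(x)G(n)^{t_n(x)}$ and $R_n(x)G(n+1)^{\widetilde t_n(x)}$ are monotone and nested, and that $h=\Phi\circ\exp$ is convex as a limit, would need to be written out rather than asserted by analogy with Theorem~\ref{KrulIimproved}.

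One point you should flag explicitly: your derivation produces
\[
\phi(x)=c\lim_{n\to\infty}G(n)^{+t_n(x)}\,\frac{1}{G(x)}\prod_{j=1}^{n}\frac{G(j)}{G(j+x)},
\]
with a \emph{positive} exponent $t_n(x)$, whereas the statement as printed carries $G(n)^{-t_n(x)}$. Your sign is the correct one; this is corroborated by the paper's own Theorem~\ref{GMBeta}, where the specialisation $G(x)=\dfrac{x(x+k)}{(2x+k+1)(2x+k)}$ appears with exponent $+t_n(x)$. You should note this discrepancy rather than silently write down a formula that differs from the one you were asked to prove.
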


\bigskip

\section{Characterization of the Beta function \label{Main}}

\bigskip

The main result of this section is the following

\begin{theorem}
\label{BetaCharacterization}Let $\mathcal{B}$ denote the Beta function and let
$k\geq0$ be arbitrarily fixed. If $\phi:\left(  0,\infty\right)
\rightarrow\left(  0,\infty\right)  $ is such that%
\begin{equation}
\phi\left(  x+1\right)  =\frac{x\left(  x+k\right)  }{\left(  2x+k+1\right)
\left(  2x+k\right)  }\phi\left(  x\right)
,\ \ \ \ \ \ \ \ \ \ \ \ x>0;\tag{6}%
\end{equation}%
\begin{equation}
\phi\left(  1\right)  =\mathcal{B}\left(  1,1+k\right)  ,\tag{7}%
\end{equation}
and$\ \log\circ\phi$ is convex, then
\begin{equation}
\phi\left(  x\right)  =\mathcal{B}\left(  x,x+k\right)
,\ \ \ \ \ \ \ \ \ \ \ x>0.\tag{8}%
\end{equation}

\end{theorem}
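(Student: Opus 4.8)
The plan is to take logarithms and reduce the multiplicative functional equation (6) to the additive form governed by the improved Krull theorem (Theorem \ref{KrulIimproved}). Setting $\varphi:=\log\circ\phi$ and
\[
F\left(  x\right)  :=\log\frac{x\left(  x+k\right)  }{\left(  2x+k+1\right)
\left(  2x+k\right)  },\qquad x>0,
\]
equation (6) becomes $\varphi\left(  x+1\right)  =\varphi\left(  x\right)
+F\left(  x\right)  $, which is exactly equation (4) of Theorem \ref{Krull} with $a=0$. My target is then to identify $\varphi$ by the uniqueness clause of Theorem \ref{KrulIimproved}.

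To apply that theorem I must verify its two hypotheses for $F$. For the limit condition, observe that $F\left(  x\right)  \rightarrow\log\frac{1}{4}$ as $x\rightarrow\infty$, whence $F\left(  x+1\right)  -F\left(  x\right)  \rightarrow0$. For concavity, write $F\left(  x\right)  =\log x+\log\left(  x+k\right)  -\log\left(  2x+k+1\right)  -\log\left(  2x+k\right)  $ and differentiate twice:
\[
F''\left(  x\right)  =-\frac{1}{x^{2}}-\frac{1}{\left(  x+k\right)  ^{2}
}+\frac{4}{\left(  2x+k+1\right)  ^{2}}+\frac{4}{\left(  2x+k\right)  ^{2}}.
\]
The four terms have competing signs, so the inequality cannot be read off term by term; the decisive step is the Jensen bound coming from convexity of $t\mapsto t^{-2}$, namely
\[
\frac{1}{x^{2}}+\frac{1}{\left(  x+k\right)  ^{2}}\geq\frac{2}{\left(
x+\tfrac{k}{2}\right)  ^{2}}=\frac{8}{\left(  2x+k\right)  ^{2}},
\]
which yields
\[
F''\left(  x\right)  \leq\frac{4}{\left(  2x+k+1\right)  ^{2}}-\frac{4}{\left(
2x+k\right)  ^{2}}<0.
\]
Thus $F$ is strictly concave on $\left(  0,\infty\right)  $. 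I expect extracting this sign to be the main obstacle, precisely because the naive term-by-term comparison is inconclusive and one must invoke the midpoint convexity estimate above.

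With both hypotheses secured, Theorem \ref{KrulIimproved} (concave-$F$ case) provides, for the initial data $x_{0}=1$, $y_{0}=\log\mathcal{B}\left(  1,1+k\right)  $, exactly one solution of $\varphi\left(  x+1\right)  =\varphi\left(  x\right)  +F\left(  x\right)  $ that is convex on some interval $\left(  b,\infty\right)  $. Now $\varphi=\log\circ\phi$ is one such solution: it solves the equation by (6), it is convex by hypothesis, and $\varphi\left(  1\right)  =\log\mathcal{B}\left(  1,1+k\right)  $ by (7). The function $x\mapsto\log\mathcal{B}\left(  x,x+k\right)  $ is a second such solution: using $\mathcal{B}\left(  x,x+k\right)  =\Gamma\left(  x\right)  \Gamma\left(  x+k\right)  /\Gamma\left(  2x+k\right)  $ together with $\Gamma\left(  x+1\right)  =x\Gamma\left(  x\right)  $ one checks it satisfies (6); it is convex because $\mathcal{B}$ is logarithmically convex as a two-variable function and composing with the affine ray $x\mapsto\left(  x,x+k\right)  $ preserves convexity of the logarithm; and it equals $\log\mathcal{B}\left(  1,1+k\right)  $ at $x=1$. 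By the uniqueness clause the two solutions coincide, and exponentiating gives $\phi\left(  x\right)  =\mathcal{B}\left(  x,x+k\right)  $ for all $x>0$, which is (8).
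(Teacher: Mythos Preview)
Your proof is correct and follows essentially the same route as the paper: take logarithms, verify that $F$ is concave with $F(x+1)-F(x)\to 0$, invoke Krull's uniqueness theorem, and check that $x\mapsto\log\mathcal{B}(x,x+k)$ is a second convex solution with the same initial value. The only noteworthy difference is in how concavity of $F$ is verified---the paper clears denominators and writes $-F''(x)$ as a polynomial with visibly nonnegative coefficients, whereas your Jensen bound $\tfrac{1}{x^{2}}+\tfrac{1}{(x+k)^{2}}\ge\tfrac{8}{(2x+k)^{2}}$ is a tidier way to reach the same conclusion.
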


\begin{proof}
Let us assume that $\phi:\left(  0,\infty\right)  \rightarrow\left(
0,\infty\right)  $ satisfies equation (6), condition (7), put
\[
\varphi=\log\circ\phi,
\]
and define the function $F:\left(  0,\infty\right)  \rightarrow\left(
0,\infty\right)  $ by the formula%
\begin{equation}
F\left(  x\right)  =\log\left(  \frac{x\left(  x+k\right)  }{\left(
2x+k+1\right)  \left(  2x+k\right)  }\right)
,\ \ \ \ \ \ \ \ \ \ \ \ x>0.\tag{9}%
\end{equation}
Hence, taking the logarithm of both sides of equation (6), we get
\begin{equation}
\varphi\left(  x+1\right)  =\varphi\left(  x\right)  +F\left(  x\right)
,\text{ \ \ \ \ \ \ \ \ \ }x>0\text{,}\tag{10}%
\end{equation}
and, from (7),%
\begin{equation}
\varphi\left(  1\right)  =\log\mathcal{B}\left(  1,1+k\right)  .\tag{11}%
\end{equation}

Calculating the first and the second derivatives of $F$ we get%
\[
F^{\prime}\left(  x\right)  =\frac{1}{x}+\frac{1}{x+k}-\frac{2}{2x+k}-\frac
{2}{2x+k+1},\text{ \ \ \ \ \ \ }x>0,
\]
and, for every $x>0,$%
\begin{align*}
F^{\prime\prime}\left(  x\right)   &  =-\frac{1}{x^{2}}-\frac{1}{\left(
x+k\right)  ^{2}}+\frac{4}{\left(  2x+k\right)  ^{2}}+\frac{4}{\left(
2x+k+1\right)  ^{2}}\\
&  =-\frac{P\left(  x\right)  }{x^{2}\left(  x+k\right)  ^{2}\left(
2x+k\right)  ^{2}\left(  2x+k+1\right)  ^{2}},
\end{align*}
where%
\begin{align*}
P\left(  x\right)   & =16x^{5}+4x^{4}\left(  6k^{2}+10k+1\right)
+8kx^{3}\left(  k+1\right)  \left(  6k+1\right) \\
& +2k^{2}x^{2}\left(  k+1\right)  \left(  17k+5\right)  +2k^{3}x\left(
k+1\right)  \left(  5k+3\right)  +k^{4}\left(  k+1\right)  ^{2}.
\end{align*}
Since $k$ together with all occurring coefficients in the numerator are
non-negative, the second derivative $F^{\prime\prime}$ is non-positive and,
consequently, $F$ is concave on $\left(  0,\infty\right)  $. Moreover, from
the definition of $F$, we get%
\[
\lim_{x\rightarrow\infty}{\left[  F\left(  x+1\right)  -F\left(  x\right)
\right]  }%
\]%
\[
=\lim_{x\rightarrow\infty}{{\left[  \log\frac{\left(  x+1\right)  \left(
x+k+1\right)  }{\left(  2x+k+2\right)  \left(  2x+k+3\right)  }-\log
{\frac{x\left(  x+k\right)  }{\left(  2x+k+1\right)  \left(  2x+k\right)  }%
}\right]  }}%
\]%
\[
=\lim_{x\rightarrow\infty}{\log\frac{\left(  x+1\right)  \left(  x+k+1\right)
}{\left(  2x+k+2\right)  \left(  2x+k+3\right)  }\frac{\left(  2x+k\right)
\left(  2x+k+1\right)  }{x\left(  x+k\right)  }}%
\]%
\[
=\log1=0.
\]

Thus the function $F$ in equation (10) satisfies the suitable conditions of
Theorem \ref{Krull}. Taking into account that the function $\varphi$ is convex
solution of equation (10), applying the Theorem \ref{Krull} with
\[
a=0,\text{ \ \ \ }x_{0}=1,\text{ \ \ \ }y_{0}=\log\mathcal{B}\left(
1,1+k\right)  ,
\]
we conclude that $\varphi$ is the only function satisfying equation (10) such
that $\varphi\left(  1\right)  =\log\mathcal{B}\left(  1,1+k\right)  $, i.e.
satisfying condition (11).

To finish the proof, it is enough to show that, for a fixed $k\geq0$, the
function $f:\left(  0,\infty\right)  \rightarrow\mathbb{R}$ defined by%
\begin{equation}
f\left(  x\right)  :=\log\mathcal{B}\left(  x,x+k\right)  ,\text{
\ \ \ \ \ \ \ \ }x>0,\tag{12}%
\end{equation}
is convex and satisfies equation (10).

Though the convexity of $f$ follows from the convexity of $\log\circ
\mathcal{B}$ (\cite{Drag}), for the completeness, we present short argument.
Note that, by (3),
\[
\mathcal{B}(x,x+k)=\int_{0}^{1}{t^{x-1}\left(  1-t\right)  ^{x+k-1}dt},\quad
x>0.
\]
Hence, making use of (12), we have
\[
f^{\prime}{\left(  x\right)  }=\frac{\int_{0}^{1}{t^{x-1}\left(  1-t\right)
^{x+k-1}{\left(  \log{t}+\log{\left(  1-t\right)  }\right)  }}dt}%
{\mathcal{B}(x,x+k)},\quad x>0.
\]
and, for all $x>0,$%
\[
f^{\prime\prime}(x)\left[  \mathcal{B}\left(  x,x+k\right)  \right]
^{2}=\left(  \int_{0}^{1}\left[  g_{x}\left(  t\right)  \right]
^{2}dt\right)  \left(  \int_{0}^{1}\left[  h_{x}\left(  t\right)  \right]
^{2}dt\right)  -\left(  \int_{0}^{1}g_{x}\left(  t\right)  h_{x}\left(
t\right)  dt\right)  ^{2},
\]
where the functions $g_{x},h_{x}:\left(  0,1\right)  \rightarrow\mathbb{R}$
are defined by
\[
g_{x}\left(  t\right)  :=\left(  t^{x-1}\left(  1-t\right)  ^{x+k-1}\right)
^{\frac{1}{2}}\log\left(  t\left(  1-t\right)  \right)  \text{, \ \ \ \ \ \ }%
h_{x}\left(  t\right)  :=\left(  t^{x-1}\left(  1-t\right)  ^{x+k-1}\right)
^{\frac{1}{2}}.
\]

Since, by the Cauchy-Schwarz inequality,
\[
\left(  \int_{0}^{1}g_{x}\left(  t\right)  h_{x}\left(  t\right)  dt\right)
^{2}\leq\left(  \int_{0}^{1}\left[  g_{x}\left(  t\right)  \right]
^{2}dt\right)  \left(  \int_{0}^{1}\left[  h_{x}\left(  t\right)  \right]
^{2}dt\right)  ,
\]
we hence conclude that $f$ is convex.

From (2), taking into account that%
\[
\Gamma\left(  x+1\right)  =x\Gamma\left(  x\right)  \text{, \ \ \ \ \ \ }x>0,
\]
we have, for all $x,y>0,$
\begin{align*}
\mathcal{B}\left(  x+1,y+1\right)   & =\frac{\Gamma\left(  x+1\right)
\Gamma\left(  y+1\right)  }{\Gamma\left(  \left(  x+y+1\right)  +1\right)
}=\frac{xy\Gamma\left(  x\right)  \Gamma\left(  y\right)  }{\left(
x+y+1\right)  \Gamma\left(  x+y+1\right)  }\\
& =\frac{xy\Gamma\left(  x\right)  \Gamma\left(  y\right)  }{\left(
x+y+1\right)  \left(  x+y\right)  \Gamma\left(  x+y\right)  }=\frac
{xy}{\left(  x+y+1\right)  \left(  x+y\right)  }\mathcal{B}\left(  x,y\right)
.
\end{align*}
Setting here $y=x+k$ gives%
\[
\mathcal{B}\left(  x+1,x+k+1\right)  =\frac{x\left(  x+k\right)  }{\left(
2x+k+1\right)  \left(  2x+k\right)  }\mathcal{B}\left(  x,x+k\right)  ,\text{
\ \ \ \ \ \ }x>0,
\]
whence, by (12), the function $\varphi:=$ $f\ $\ satisfies the
functional\ equation%
\[
\varphi\left(  x+1\right)  =\varphi\left(  x\right)  +F\left(  x\right)
\text{, \ \ \ \ \ \ \ \ \ \ }x>0,
\]
where $F$ is defined by (9). Thus the function $\varphi=$ $f$ satisfies
equation (10) and, clearly, condition (11).

This completes the proof.
\end{proof}

\begin{remark}
Since, by (1) and (2), we have%
\[
\mathcal{B}\left(  1,1+k\right)  =\frac{\Gamma\left(  1\right)  \Gamma\left(
1+k\right)  }{\Gamma\left(  1+\left(  1+k\right)  \right)  }=\frac
{k\Gamma\left(  k\right)  }{\left(  1+k\right)  \Gamma\left(  k+1\right)
}=\frac{k\Gamma\left(  k\right)  }{\left(  1+k\right)  k\Gamma\left(
k\right)  }=\frac{1}{1+k},
\]
condition (7) is equivalent to%
\[
\phi\left(  1\right)  =\frac{1}{k+1}.
\]

\end{remark}

\begin{remark}
Since the Beta function is symmetric, the above theorem remains true on
replacing the functions $\left(  0,\infty\right)  \ni x\longmapsto\log
\circ\mathcal{B}\left(  x,x+k\right)  $ by $\left(  0,\infty\right)  \ni
x\longmapsto\log\circ\mathcal{B}\left(  x+k,x\right)  .$
\end{remark}

\bigskip

From Theorem \ref{KrulIimproved} and Theorem \ref{BetaCharacterization}\ we
obtain the following

\begin{corollary}
Let $\mathcal{B}$ denote the Beta function and let $k\geq0$ be arbitrarily
fixed. If $\phi:\left(  0,\infty\right)  \rightarrow\left(  0,\infty\right)  $
satisfies equation (6), condition (7), and $\phi$ is logarithmically convex in
an interval $\left(  a,\infty\right)  $ for some $a>0,$ then\ (8) holds true.
\end{corollary}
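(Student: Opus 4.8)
The plan is to repeat, almost verbatim, the argument used to prove Theorem \ref{BetaCharacterization}, but to replace the appeal to Krull's Theorem \ref{Krull} by its sharpened form, Theorem \ref{KrulIimproved}, which is precisely designed to handle the weaker hypothesis of convexity on a tail interval rather than on the whole half-line. As before, I would set $\varphi:=\log\circ\phi$ and introduce the function $F$ by formula (9), so that taking logarithms of (6) turns it into the additive equation (10), while (7) becomes the initial condition (11).

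The concavity of $F$ on $\left(0,\infty\right)$ and the relation $\lim_{x\rightarrow\infty}\left[F\left(x+1\right)-F\left(x\right)\right]=0$ have already been established in the proof of Theorem \ref{BetaCharacterization}, so I would simply invoke them here. Since $\phi$ is logarithmically convex on $\left(a,\infty\right)$, the function $\varphi=\log\circ\phi$ is convex on that interval; in the notation of Theorem \ref{KrulIimproved}, applied with left endpoint $0$, this is exactly convexity on some interval $\left(b,\infty\right)$ with $b\geq0$. Because $F$ is concave, the parenthetical case of Theorem \ref{KrulIimproved} applies and yields, for the fixed pair $\left(x_{0},y_{0}\right)=\left(1,\log\mathcal{B}\left(1,1+k\right)\right)$, exactly one function satisfying (10) that is convex on some tail interval and meets the initial condition (11).

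It then remains to exhibit the Beta function as a competitor in this uniqueness statement. As shown inside the proof of Theorem \ref{BetaCharacterization}, the function $f\left(x\right):=\log\mathcal{B}\left(x,x+k\right)$ is convex on all of $\left(0,\infty\right)$, hence \emph{a fortiori} on every tail interval, satisfies the additive equation (10), and fulfils $f\left(1\right)=\log\mathcal{B}\left(1,1+k\right)$, i.e.\ condition (11). Thus both $\varphi$ and $f$ belong to the class in which Theorem \ref{KrulIimproved} guarantees uniqueness, so $\varphi=f$, that is $\log\phi\left(x\right)=\log\mathcal{B}\left(x,x+k\right)$, and therefore $\phi\left(x\right)=\mathcal{B}\left(x,x+k\right)$ for all $x>0$, which is (8).

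The only genuinely delicate point is the bookkeeping of the convex/concave alternative: one must check that the concavity of $F$ pairs, via the parenthetical reading of Theorem \ref{KrulIimproved}, with convexity of $\varphi$ on the tail, and that the interval $\left(a,\infty\right)$ of the hypothesis is a legitimate instance of the interval $\left(b,\infty\right)$ appearing in that theorem. Once this matching is made, no further computation is needed, since all the analytic facts about $F$ and about $\log\mathcal{B}\left(\cdot,\cdot+k\right)$ are already available from the proof of Theorem \ref{BetaCharacterization}.
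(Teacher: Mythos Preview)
Your proposal is correct and follows precisely the route the paper indicates: the corollary is stated in the paper without a detailed proof, merely as a consequence of Theorem~\ref{KrulIimproved} combined with Theorem~\ref{BetaCharacterization}, and your argument spells out exactly this combination. Your handling of the concave/convex matching and the tail-interval bookkeeping is accurate.
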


This corollary can be substantially strengthened with the aid of the
geometrical convexity:

\begin{theorem}
\label{GMBeta}Let $\mathcal{B}$ denote the Beta function and let $k\geq0$ be
arbitrarily fixed. If $\phi:\left(  0,\infty\right)  \rightarrow\left(
0,\infty\right)  $ satisfies equation (6), condition (7), and $\phi$ is
geometrically convex in an interval $\left(  a,\infty\right)  $ for some
$a>0$, then (8) holds true. Moreover, for all $x>0,$
\end{theorem}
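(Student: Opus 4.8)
The plan is to apply Theorem \ref{GronauMatkowski} with the multiplicative factor
\[
G\left(  x\right)  =\frac{x\left(  x+k\right)  }{\left(  2x+k+1\right)  \left(
2x+k\right)  },\ \ \ \ \ \ x>0,
\]
so that equation (6) becomes exactly the functional equation $\phi\left(
x+1\right)  =G\left(  x\right)  \phi\left(  x\right)  $ of that theorem. To invoke it I must verify its two hypotheses for this particular $G$. First I would check the limit condition: a direct computation gives $\lim_{x\rightarrow\infty}\frac{G\left(  x+\delta\right)  }{G\left(  x\right)  }=1$ for every $\delta>0$, since both $G\left(  x+\delta\right)  $ and $G\left(  x\right)  $ behave like $\frac{1}{4}$ as $x\rightarrow\infty$ (the leading terms of numerator and denominator match). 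Second, I would establish that $\log\circ G$ is concave on a vicinity of $\infty$; but this is precisely the statement that the function $F=\log\circ G$ of (9) is concave, which was already proved in Theorem \ref{BetaCharacterization} by showing $F^{\prime\prime}\leq0$ on all of $\left(  0,\infty\right)  $ via the nonnegativity of the polynomial $P$. Thus both hypotheses hold on $\left(  0,\infty\right)  $, not merely near infinity.

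With the hypotheses verified, Theorem \ref{GronauMatkowski} guarantees that for each $c>0$ there is exactly one solution of (6) that is geometrically convex on some vicinity of $\infty$ and satisfies $\phi\left(  1\right)  =c$. Taking $c=\mathcal{B}\left(  1,1+k\right)  $ to match condition (7), uniqueness forces any geometrically convex solution to coincide with the distinguished one produced by the theorem. The remaining task is to identify that distinguished solution as $x\mapsto\mathcal{B}\left(  x,x+k\right)  $. For this I would observe that the candidate $\mathcal{B}\left(  x,x+k\right)  $ already satisfies (6) and (7)—both were checked in the proof of Theorem \ref{BetaCharacterization}—so it suffices to show it is geometrically convex near $\infty$, i.e. that $\log\circ\mathcal{B}\left(  e^{u},e^{u}+k\right)  $ is convex in $u$ for large $u$. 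By the uniqueness clause the two functions must then agree, yielding (8).

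The main obstacle will be confirming the geometric convexity of the target function $x\mapsto\mathcal{B}\left(  x,x+k\right)  $ near infinity, since geometric (rather than ordinary logarithmic) convexity is what the Gronau--Matkowski framework demands. Fortunately, Theorem \ref{GronauMatkowski} does the heavy lifting: because its conclusion supplies an explicit product formula for the unique geometrically convex solution, I need not verify geometric convexity of $\mathcal{B}\left(  x,x+k\right)  $ by hand but can instead argue that the explicit limit/product expression, evaluated with the present $G$ and $c=\mathcal{B}\left(  1,1+k\right)  $, telescopes to $\mathcal{B}\left(  x,x+k\right)  $. Indeed the products $\prod_{j=1}^{n}\frac{G\left(  j\right)  }{G\left(  j+x\right)  }$ telescope through the Beta recursion, and the remaining factors involving $G\left(  n\right)  ^{-(\cdots)}$ and $\frac{1}{G\left(  x\right)  }$ can be resolved using the asymptotics $G\left(  n\right)  \to\frac14$ and the functional equation. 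Carrying out this limit identification, which reproduces the moreover-clause formula the theorem is about to state, is the one genuinely computational step; everything else is a matter of matching hypotheses and invoking the cited uniqueness.
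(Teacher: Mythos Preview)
Your overall architecture matches the paper exactly: set $G(x)=\dfrac{x(x+k)}{(2x+k+1)(2x+k)}$, verify the hypotheses of Theorem~\ref{GronauMatkowski} by recycling the concavity of $F=\log\circ G$ and the limit computation from the proof of Theorem~\ref{BetaCharacterization}, and then invoke uniqueness with $c=\mathcal{B}(1,1+k)$.

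The one substantive divergence is in how you identify the distinguished solution with $x\mapsto\mathcal{B}(x,x+k)$. The paper does \emph{not} compute the product formula. Instead it observes that $x\mapsto\mathcal{B}(x,x+k)$ is monotone and, by Theorem~\ref{BetaCharacterization}, logarithmically convex; then it cites Lemma~1 of \cite{DG-JMI} to conclude that a monotone, logarithmically convex function is automatically geometrically convex. Uniqueness in Theorem~\ref{GronauMatkowski} then finishes immediately. This is a one-line identification, and it is the approach you listed first and then set aside.

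Your preferred route---evaluating the explicit limit/product and showing it collapses to $\mathcal{B}(x,x+k)$---is viable but heavier than you indicate. The product $\prod_{j=1}^{n}\frac{G(j)}{G(j+x)}$ does telescope via the Beta recursion, and together with $c/G(x)$ it yields
\[
\mathcal{B}(x,x+k)\cdot\frac{\mathcal{B}(n+1,n+k+1)}{\mathcal{B}(n+x+1,n+x+k+1)}.
\]
The remaining task is to show that this Beta ratio combines with the factor $G(n)^{-\frac{\log(n+1+x)-\log(n+1)}{\log(n+1)-\log n}}$ to tend to $1$. That is not a consequence of ``the functional equation'' alone: the exponent tends to $x$, so $G(n)^{-(\cdots)}\to 4^{x}$, and the Beta ratio tends to $4^{-x}$, but establishing the latter requires Stirling-type asymptotics for $\mathcal{B}(m,m+k)$ as $m\to\infty$. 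So your plan works, but the ``one genuinely computational step'' is an honest asymptotic estimate, not just bookkeeping; the paper's lemma-based shortcut avoids it entirely.
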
%

\begin{align*}
\phi\left(  x\right)   & =c\lim_{n\rightarrow\infty}\left(  \frac{n\left(
n+k\right)  }{\left(  2n+k+1\right)  \left(  2n+k\right)  }\right)
^{\frac{\log\left(  n+1+x\right)  \text{-log}\left(  n+1\right)  }{\log\left(
n+1\right)  -\log n}}\frac{\left(  2x+k+1\right)  \left(  2x+k\right)
}{x\left(  x+k\right)  }\\
& \prod\limits_{j=1}^{n}\frac{j\left(  j+k\right)  \left(  2\left(
j+x\right)  +k+1\right)  \left(  2\left(  j+x\right)  +1\right)  }{\left(
j+x\right)  \left(  j+x+k\right)  \left(  2j+k+1\right)  \left(  2j+1\right)
}.
\end{align*}

\begin{proof}
Define $G:\left(  0,\infty\right)  \rightarrow\left(  0,\infty\right)  ~$ by%
\[
G\left(  x\right)  =\frac{x\left(  x+k\right)  }{\left(  2x+k+1\right)
\left(  2x+k\right)  }.
\]

In the proof of Theorem \ref{BetaCharacterization} we have shown that the
concavity of the function $F:=\log\circ G$, and that
\[
\lim_{x\rightarrow\infty}\left[  \log G\left(  x+1\right)  -\log G\left(
x\right)  \right]  =0,
\]
which implies
\[
\lim_{x\rightarrow\infty}\frac{G\left(  x+\delta\right)  }{G\left(  x\right)
}=1
\]
with $\delta=1$. Thus all the assumptions of Theorem \ref{GronauMatkowski} are
satisfied. Consequently, for every $c>0,$ there exists exactly one solution
$\phi:\left(  0,\infty\right)  \rightarrow\left(  0,\infty\right)  $ of the
functional equation%
\[
\phi\left(  x+1\right)  =G\left(  x\right)  \phi\left(  x\right)
,\ \ \ \ \ x>0,
\]
such that $\phi$ is geometrically convex on $\left(  a,\infty\right)  $ for
some $a\geq0$ and $\phi\left(  1\right)  =c.$ The only thing left that has to
be verified is the validity of
\[
\phi\left(  x\right)  =\mathcal{B}\left(  x,x+k\right)  ,\text{ \ \ \ \ \ }%
x>0.
\]
Since $x\longmapsto\mathcal{B}\left(  x,x+k\right)  $ is an increasing
function and, due to Theorem \ref{BetaCharacterization}, logarithmically
convex, the mapping $x\longmapsto\mathcal{B}\left(  x,x+k\right)  $ is also
geometrically convex, see Lemma 1 in \cite{DG-JMI} Now clearly, putting
$c=\mathcal{B}\left(  1,1+k\right)  $ yields the claim.
\end{proof}

\section{Beta-type functions, their equality and characterization of the Gamma
function\label{BetaTypeFunctions}}

Taking into account the specific form (2) of\ the Beta function, we introduce
a broader class two variable \textit{beta-type functions} and we examine their properties.

\begin{definition}
\label{BetaDefinition}Let $a\geq-\infty$ be arbitrary fixed, and let $\gamma:$
$\left(  a,\infty\right)  \rightarrow\left(  0,\infty\right)  .$ The function
$B_{\gamma}:\left(  a,\infty\right)  ^{2}\rightarrow\left(  0,\infty\right)
$,%
\[
B_{\gamma}\left(  x,y\right)  :=\frac{\gamma\left(  x\right)  \gamma\left(
y\right)  }{\gamma\left(  x+y\right)  }\text{, \ \ \ \ \ \ \ \ }x,y>a\text{,}%
\]
is called \textit{beta-type function}, and the function $\gamma$ is said to be
its generator.
\end{definition}

\bigskip

\begin{remark}
Taking here $a=0$ and $\gamma=\Gamma$ where $\Gamma$\ is the Euler Gamma
function we obtain%
\[
\mathcal{B=}B_{\Gamma}.
\]
Thus the Beta function, denoted by $\mathcal{B}$, is a beta-type function, and
the Euler Gamma function is its generator.
\end{remark}

\begin{theorem}
Let $a\geq-\infty$ be fixed, and let $\gamma_{1},\gamma_{2}:$ $\left(
a,\infty\right)  \rightarrow\left(  0,\infty\right)  $ be arbitrary functions.

(i) The equality
\[
B_{\gamma_{1}}=B_{\gamma_{2}}%
\]
holds if, and only if, there exists a (unique) exponential function
\textsc{e}$:\mathbb{R\rightarrow}\left(  0,\infty\right)  ,$ i.e. satisfying
the functional equation%
\[
\text{\textsc{e}}\left(  x+y\right)  =\text{\textsc{e}}\left(  x\right)
\text{\textsc{e}}\left(  y\right)  \text{, \ \ \ \ \ \ }x,y\in\mathbb{R}%
\text{,}%
\]
and such that
\[
\gamma_{2}\left(  x\right)  =\text{\textsc{e}}\left(  x\right)  \gamma
_{1}\left(  x\right)  \text{, \ \ \ \ \ \ }x>a;
\]

(ii) Assume that the function $\frac{\gamma_{2}}{\gamma_{1}}$ is Lebesque
measurable, or continuous at a point, or its graph is not dense in $\left(
a,\infty\right)  \times\left(  0,\infty\right)  .$ Then $B_{\gamma_{1}%
}=B_{\gamma_{2}}$ if, and only if, there is (a unique) $c\in\mathbb{R}$ such
that%
\[
\gamma_{2}\left(  x\right)  =e^{cx}\gamma_{1}\left(  x\right)  \text{,
\ \ \ \ \ \ }x>a\text{.}%
\]

\end{theorem}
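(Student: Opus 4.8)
The plan is to reduce the identity $B_{\gamma_1}=B_{\gamma_2}$ to the exponential Cauchy equation for the quotient $h:=\gamma_2/\gamma_1$ and then to invoke the classical theory of that equation. The \emph{sufficiency} in (i) is immediate: if $\gamma_2=E\gamma_1$ with $E$ the exponential function of the statement, then
\[
B_{\gamma_2}(x,y)=\frac{E(x)\gamma_1(x)\,E(y)\gamma_1(y)}{E(x+y)\gamma_1(x+y)}=\frac{\gamma_1(x)\gamma_1(y)}{\gamma_1(x+y)}=B_{\gamma_1}(x,y),
\]
since $E(x)E(y)=E(x+y)$. For the \emph{necessity} I would set $h:=\gamma_2/\gamma_1:(a,\infty)\to(0,\infty)$ and rewrite $B_{\gamma_1}=B_{\gamma_2}$ as
\[
h(x)h(y)=h(x+y),\qquad x,y>a,
\]
so that $g:=\log\circ h$ solves Cauchy's additive equation $g(x+y)=g(x)+g(y)$ on the half-line $(a,\infty)$.

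The key step is to extend this restricted solution to all of $\mathbb{R}$. Fixing $x_0>\max(a,0)$, one checks by induction that $g(kx_0)=k\,g(x_0)$ for every positive integer $k$, and then defines, for each $x\in\mathbb{R}$,
\[
G(x):=g(x+nx_0)-n\,g(x_0),
\]
where $n$ is any nonnegative integer with $x+nx_0>a$. Taking the shifts large enough that the arguments exceed $\max(a,0)$ keeps every relevant sum inside $(a,\infty)$, so that the equation for $g$ applies; this shows that $G$ is well defined (independent of $n$), restricts to $g$ on $(a,\infty)$, and is additive on $\mathbb{R}$. Setting $E:=\exp\circ G$ then yields an exponential function on $\mathbb{R}$ with $E|_{(a,\infty)}=h$, whence $\gamma_2=E\gamma_1$. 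Uniqueness of $E$ follows because the ratio of two exponential extensions is again exponential and equals $1$ on $(a,\infty)$; its logarithm is an additive function vanishing on a half-line, hence identically zero. When $a=-\infty$ nothing is to be done, as $h$ is already exponential on all of $\mathbb{R}$.

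For (ii) the converse is once more a one-line substitution, and $c$ is unique since $e^{c_1x}=e^{c_2x}$ on $(a,\infty)$ forces $c_1=c_2$. In the forward direction I would observe that each of the three regularity hypotheses on $\gamma_2/\gamma_1=h$ --- Lebesgue measurability, continuity at a point, or non-density of the graph --- is inherited by the additive extension $G$, because on each interval $G$ coincides with a translate of $g=\log\circ h$. The classical regularity theorems for Cauchy's equation, namely that an additive function which is measurable, or continuous at one point, or whose graph is not dense in the plane must be linear, then give $G(x)=cx$, so that $h(x)=e^{cx}$ and $\gamma_2(x)=e^{cx}\gamma_1(x)$.

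I expect the extension step to be the main obstacle: the equation for $h$ holds only on the half-line $(a,\infty)$, yet the statement demands a \emph{genuine} exponential function on all of $\mathbb{R}$, so the bookkeeping that keeps every argument of $g$ inside its domain --- delicate when $a$ is finite and possibly negative --- must be carried out carefully, together with the transfer of the regularity conditions from $h$ to the global additive extension required in (ii).
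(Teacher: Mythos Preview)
Your proposal is correct and follows essentially the same route as the paper: reduce $B_{\gamma_1}=B_{\gamma_2}$ to the exponential Cauchy equation for $h=\gamma_2/\gamma_1$ on $(a,\infty)$, extend to an exponential function on $\mathbb{R}$, and then invoke the classical regularity theorems for part~(ii). The only difference is one of detail --- the paper dispatches the extension step with a single ``Clearly'' and a citation, whereas you spell out the shift construction $G(x)=g(x+nx_0)-n\,g(x_0)$ and the transfer of regularity, which is exactly what that ``Clearly'' hides.
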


\begin{proof}
Assume that $B_{\gamma_{1}}=B_{\gamma_{2}}$ i.e., by Definition
\ref{BetaDefinition}, we have
\[
\frac{\gamma_{2}\left(  x\right)  \gamma_{2}\left(  y\right)  }{\gamma
_{2}\left(  x+y\right)  }=\frac{\gamma_{1}\left(  x\right)  \gamma_{1}\left(
y\right)  }{\gamma_{1}\left(  x+y\right)  }\text{, \ \ \ \ \ \ \ \ }x,y>a,
\]
or, equivalently,%
\[
\frac{\gamma_{2}\left(  x+y\right)  }{\gamma_{1}\left(  x+y\right)  }%
=\frac{\gamma_{2}\left(  x\right)  }{\gamma_{1}\left(  x\right)  }\frac
{\gamma_{2}\left(  y\right)  }{\gamma_{1}\left(  y\right)  }\text{,
\ \ \ \ \ \ \ \ }x,y>a,
\]
that is
\[
\frac{\gamma_{2}}{\gamma_{1}}\left(  x+y\right)  =\frac{\gamma_{2}}{\gamma
_{1}}\left(  x\right)  \frac{\gamma_{2}}{\gamma_{1}}\left(  y\right)  \text{,
\ \ \ \ \ \ \ \ }x,y>a.
\]
Clearly, the function $\frac{\gamma_{2}}{\gamma_{1}}$ can be extended to a
unique exponential function \textsc{e}$:\mathbb{R\rightarrow}\left(
0,\mathbb{\infty}\right)  $, which proves (i).

Part (ii) is a consequence of the suitable classical results (Acz\'{e}l
\cite{JA} Chapter II, Kuczma \cite{Kuczma2} Chapter XIII).
\end{proof}

Hence, taking into account the last remark, we obtain

\begin{corollary}
Let $\gamma:$ $\left(  0,\infty\right)  \rightarrow\left(  0,\infty\right)  $
be an arbitrary function. Then

(i) the equality $B_{\gamma}=\mathcal{B}$ holds if, and only if, there is
exponential function \textsc{e}$:\mathbb{R\rightarrow}\left(  0,\infty\right)
$ such that
\[
\gamma\left(  x\right)  =\text{\textsc{e}}\left(  x\right)  \Gamma\left(
x\right)  \text{, \ \ \ \ \ \ }x>0.
\]
(ii) assume that $\gamma$ is Lebesque measurable, or continuous at a point, or
its graph is not dense in $\left(  0,\infty\right)  ^{2};$ then $B_{\gamma
}=\mathcal{B}$ if, and only if, there is $c\in\mathbb{R}$ such that%
\[
\gamma\left(  x\right)  =e^{cx}\Gamma\left(  x\right)  \text{, \ \ \ \ \ \ }%
x>0\text{.}%
\]

\end{corollary}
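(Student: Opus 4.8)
The plan is to deduce both parts directly from the preceding Theorem by specializing its two generators to $\gamma_{1}=\Gamma$ and $\gamma_{2}=\gamma$ on the half-line $\left(  0,\infty\right)  $ (so that $a=0$), and by invoking the Remark that identifies the classical Beta function with the beta-type function generated by $\Gamma$, namely $\mathcal{B}=B_{\Gamma}$. With this identification the equality $B_{\gamma}=\mathcal{B}$ reads $B_{\gamma_{2}}=B_{\gamma_{1}}$, so the hypotheses of the Theorem are met verbatim and no fresh analysis of the multiplicative functional equation is needed.

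For part (i) I would simply quote part (i) of the Theorem: $B_{\gamma}=B_{\Gamma}$ holds if and only if there is a (unique) exponential function $\text{\textsc{e}}:\mathbb{R}\rightarrow\left(  0,\infty\right)  $ with $\gamma\left(  x\right)  =\text{\textsc{e}}\left(  x\right)  \Gamma\left(  x\right)  $ for $x>0$. The converse implication is already built into that statement, since the multiplicativity of $\text{\textsc{e}}$ makes the factor cancel in $B_{\gamma}\left(  x,y\right)  =\gamma\left(  x\right)  \gamma\left(  y\right)  /\gamma\left(  x+y\right)  $, returning $B_{\gamma}=B_{\Gamma}=\mathcal{B}$. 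Thus (i) is an immediate instance of the Theorem.

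For part (ii) the only additional work is to transfer the regularity hypotheses stated on $\gamma$ to the quotient $\gamma/\Gamma=\gamma_{2}/\gamma_{1}$, which is the object to which part (ii) of the Theorem applies. Here one uses that $\Gamma$ is continuous and strictly positive on $\left(  0,\infty\right)  $: the map $\left(  x,u\right)  \mapsto\left(  x,u/\Gamma\left(  x\right)  \right)  $ is a homeomorphism of $\left(  0,\infty\right)  ^{2}$ carrying the graph of $\gamma$ onto the graph of $\gamma/\Gamma$. Consequently, if $\gamma$ is Lebesgue measurable (respectively continuous at a point) then so is $\gamma/\Gamma$, and if the graph of $\gamma$ is not dense in $\left(  0,\infty\right)  ^{2}$ then, since a homeomorphism preserves density and non-density, neither is the graph of $\gamma/\Gamma$. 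Under any of these conditions part (ii) of the Theorem upgrades the abstract exponential $\text{\textsc{e}}$ of (i) to $\text{\textsc{e}}\left(  x\right)  =e^{cx}$ for a unique $c\in\mathbb{R}$, whence $\gamma\left(  x\right)  =e^{cx}\Gamma\left(  x\right)  $.

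The only mild obstacle I anticipate is precisely this transfer of the regularity properties from $\gamma$ to $\gamma/\Gamma$; it is what legitimizes phrasing the hypotheses of the Corollary directly in terms of $\gamma$, and it is the single place where the smoothness and positivity of $\Gamma$ are genuinely used. Everything else is a verbatim specialization of the Theorem together with the identity $\mathcal{B}=B_{\Gamma}$.
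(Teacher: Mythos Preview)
Your proposal is correct and matches the paper's approach exactly: the paper derives the corollary as an immediate consequence of the preceding Theorem (specialized with $\gamma_{1}=\Gamma$, $\gamma_{2}=\gamma$, $a=0$) together with the Remark that $\mathcal{B}=B_{\Gamma}$. Your explicit justification of the regularity transfer from $\gamma$ to $\gamma/\Gamma$ in part~(ii) is actually more careful than the paper, which leaves this step entirely implicit.
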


\section{\bigskip Directional convexity and concluding corollary}

This section is motivated by the main result of section 3.

Let $C\subset\mathbb{R}^{k}$ be an open and convex set and let a vector
$\mathbf{h}\in\mathbb{R}^{k}$, $\mathbf{h}\neq\mathbf{0,}$ be fixed. Then, for
every $\mathbf{x}\in C$, the set
\[
I_{\mathbf{x},\mathbf{h}}:=\left\{  t\in\mathbb{R}:\mathbf{x}+t\mathbf{h}\in
C\right\}
\]
is a nonempty open interval and $0\in I_{\mathbf{x},\mathbf{h}}.$

We say that a function $f:C\rightarrow\mathbb{R}$ is \textit{convex (concave,
affine) in the direction of the vector} $\mathbf{h}$, if for every
$\mathbf{x}\in C,$ the function $\varphi_{\mathbf{x},\mathbf{h}}%
:I_{\mathbf{x},\mathbf{h}}\rightarrow\mathbb{R}$ defined by
\[
\varphi_{\mathbf{x},\mathbf{h}}\left(  t\right)  :=f\left(  \mathbf{x}%
+t\mathbf{h}\right)  \text{, \ \ \ \ \ \ }t\in I_{\mathbf{x},\mathbf{h}},
\]
is convex (concave, affine).

Of course, \ $f:C\rightarrow\mathbb{R}$ is \textit{convex in the direction of
the vector} $\mathbf{h},$ iff, for every $r\in\mathbb{R}$, $r\neq0$, the
function $f:C\rightarrow\mathbb{R}$ is \textit{convex in the direction of the
vector} $r\mathbf{h}.$

\begin{remark}
Let $C\subset\mathbb{R}^{k}$ be an open convex set and $\mathbf{h}%
\in\mathbb{R}^{k}$, and suppose that $f:C\rightarrow\mathbb{R}$ is twice
differentiable. Then

(i) $\ f$ is \textit{convex in the direction of the vector} $\mathbf{h}$, iff
for every $\mathbf{x}\in C,$
\[
\left(  \varphi_{\mathbf{x},\mathbf{h}}\right)  ^{\prime\prime}\left(
0\right)  \geq0;
\]

(ii) if $k=2$ and $\ \mathbf{h}=\left(  u,v\right)  ,$ $u^{2}+v^{2}\neq0;$
then $f$ is convex in the direction of the vector $\left(  u,v\right)  $, iff
for every $\left(  x,y\right)  \in C,$
\[
\left(  \varphi_{\left(  x,y\right)  ,\left(  u,v\right)  }\right)
^{\prime\prime}\left(  0\right)  =\frac{\partial^{2}f}{\partial x^{2}}\left(
x,y\right)  u^{2}+2\frac{\partial^{2}f}{\partial x\partial y}\left(
x,y\right)  uv+\frac{\partial^{2}f}{\partial x^{2}}\left(  x,y\right)
v^{2}\geq0;
\]
moreover, $f$ is convex, iff for every vector $\left(  u,v\right)
\in\mathbb{R}^{2}$, the function $f$ is convex in the direction of the vector
$\left(  u,v\right)  $, that is iff, (\cite{Hardy} pp. 78-81), for all
$\left(  x,y\right)  \in C$ and for all $\left(  u,v\right)  \in\mathbb{R}%
^{2},$%
\[
\left(  \varphi_{\left(  x,y\right)  ,\left(  u,v\right)  }\right)
^{\prime\prime}\left(  0\right)  =\frac{\partial^{2}f}{\partial x^{2}}\left(
x,y\right)  u^{2}+2\frac{\partial^{2}f}{\partial x\partial y}\left(
x,y\right)  uv+\frac{\partial^{2}f}{\partial x^{2}}\left(  x,y\right)
v^{2}\geq0.
\]

\end{remark}

In particular, in view of the second part of this remark, a twice
differentiable function $f$ defined in a convex set $C\subset\mathbb{R}^{2}$
is convex in the direction of the vector $\left(  1,1\right)  $ (or in the
direction of the main diagonal), iff%
\[
\frac{\partial^{2}f}{\partial x^{2}}\left(  x,y\right)  +2\frac{\partial^{2}%
f}{\partial x\partial y}\left(  x,y\right)  +\frac{\partial^{2}f}{\partial
x^{2}}\left(  x,y\right)  \geq0,\text{ \ \ \ \ \ }\left(  x,y\right)  \in C.
\]

\begin{example}
The function $f:\mathbb{R}^{2}\rightarrow\mathbb{R}$, $\ f\left(  x,y\right)
=x^{2}+5xy+y^{2}$, is not convex in the direction of the vector $\left(
1,-1\right)  $, as $\left(  \varphi_{\left(  x,y\right)  ,\left(  1,-1\right)
}\right)  ^{\prime\prime}\left(  0\right)  =-1$ for every $\left(  x,y\right)
\in\mathbb{R}^{2};$ in particular, $f$ it is not convex. Since $\left(
\varphi_{\left(  x,y\right)  ,\left(  1,1\right)  }\right)  ^{\prime\prime
}\left(  0\right)  =9$ for all $\left(  x,y\right)  \in\mathbb{R}^{2}$, the
function $f$ is convex in the direction of the vector $\left(  1,1\right)  .$
\end{example}

\bigskip Applying Theorem 4, we obtain the following

\begin{corollary}
A function $B:\left(  0,\infty\right)  ^{2}\rightarrow\left(  0,\infty\right)
$ is symmetric, logarithmically convex in the direction of the vector $(1,1),$
and
\[
B\left(  x+1,x+y+1\right)  =\frac{x\left(  x+y\right)  }{\left(
2x+y+1\right)  \left(  2x+y\right)  }B\left(  x,y\right)  \text{,
\ \ \ \ \ }x,y>0;
\]%
\[
B\left(  1,1+y\right)  =\mathcal{B}\left(  1,1+y\right)  ,\text{ \ \ \ \ }y>0;
\]
if, and only if, $B=\mathcal{B}$.
\end{corollary}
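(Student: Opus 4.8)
The plan is to deduce this two-variable statement from the one-variable characterization of Theorem \ref{BetaCharacterization} by slicing $B$ along the rays parallel to the main diagonal. For each fixed $k\geq0$ I would set
\[
\phi_k(x):=B(x,x+k),\qquad x>0,
\]
which is exactly the restriction of $B$ to the ray $\{(s,s+k):s>0\}$ in the direction of $(1,1)$, and then verify that $\phi_k$ fulfils the three hypotheses of Theorem \ref{BetaCharacterization}: the functional equation (6), the initial condition (7), and the convexity of $\log\circ\phi_k$.

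The directional convexity assumption is precisely what converts into the one-variable convexity needed here. Fixing the base point $(x_0,x_0+k)$ and the vector $\mathbf h=(1,1)$, the hypothesis that $\log\circ B$ is convex in the direction of $(1,1)$ says that $t\mapsto\log B(x_0+t,\,x_0+k+t)$ is convex on its interval of definition; since for $k\geq0$ that interval is $(-x_0,\infty)$, the substitution $x=x_0+t$ already gives convexity of $x\mapsto\log\phi_k(x)$ on all of $(0,\infty)$. Restricting the assumed product relation to the ray of offset $k$ turns it into
\[
\phi_k(x+1)=\frac{x(x+k)}{(2x+k+1)(2x+k)}\,\phi_k(x),\qquad x>0,
\]
which is equation (6); and the initial condition $B(1,1+y)=\mathcal{B}(1,1+y)$, read at $y=k$, is just $\phi_k(1)=\mathcal{B}(1,1+k)$, i.e. (7). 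Theorem \ref{BetaCharacterization} now applies and yields $\phi_k(x)=\mathcal{B}(x,x+k)$, that is $B(x,x+k)=\mathcal{B}(x,x+k)$ for every $x>0$.

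The remainder is assembly. Letting $k$ range over $[0,\infty)$ gives $B(x,y)=\mathcal{B}(x,y)$ for all pairs with $y\geq x>0$, and the symmetry of both $B$ and $\mathcal{B}$ extends this to $y<x$, so $B=\mathcal{B}$ throughout $(0,\infty)^2$. The reverse implication is immediate: $\mathcal{B}$ is symmetric, each slice $x\mapsto\log\mathcal{B}(x,x+k)$ was shown to be convex in the proof of Theorem \ref{BetaCharacterization} (so $\mathcal{B}$ is logarithmically convex in the direction of $(1,1)$), and $\mathcal{B}$ satisfies both the product relation and the initial condition.

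I expect the main point of care to be the diagonal offset $k=0$. Because translation in the direction $(1,1)$ never links a diagonal point to an off-diagonal one, the agreement of $B$ and $\mathcal{B}$ away from the diagonal does not by itself determine $B$ on the diagonal. The clean remedy is to run the argument above verbatim at $k=0$, which is legitimate since Theorem \ref{BetaCharacterization} is stated for $k\geq0$: the slice $\phi_0(x)=B(x,x)$ has $\log\circ\phi_0$ convex, the product relation specialises to (6) with $k=0$, and the initial value is $\phi_0(1)=\mathcal{B}(1,1)=1$; Theorem \ref{BetaCharacterization} then forces $\phi_0=\mathcal{B}(\cdot,\cdot)$ on the diagonal as well, completing the identification $B=\mathcal{B}$.
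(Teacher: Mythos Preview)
Your approach is exactly what the paper intends: the corollary is stated as a direct consequence of Theorem~\ref{BetaCharacterization} with no further argument, and your slicing along the rays $\{(x,x+k):x>0\}$ is precisely how that application is carried out. Your write-up is in fact more careful than the paper's, notably in isolating the diagonal case $k=0$; the only point to watch there is that the corollary's hypotheses are stated for $y>0$, so the instance $y=0$ you invoke for both the functional equation and the initial value $\phi_0(1)=B(1,1)=\mathcal{B}(1,1)$ is not literally covered and needs either a tacit extension of the hypotheses to $y\geq 0$ or a separate justification.
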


Let us mention that some notions of directional convexity appeared in real
analysis \cite{Matousek} as well as in complex analysis \cite{Hengart}.

\bigskip
\end{document}